\numberwithin{equation}{section}
\def\bC{\mathbb{C}}
\def\bN{\mathbb{N}}
\def\bZ{\mathbb{Z}}
\def\z{\mathbb Z}
\def\c{\mathbb C}
\def\n{\mathbb N}
\def\BE{\begin{equation}}
\def\EE{\end{equation}}
\newtheorem{theo}{{Theorem}}[section]
\newtheorem{lemm}[theo]{Lemma}
\newtheorem{rema}[theo]{Remark}
\newtheorem{defi}[theo]{Definition}
\newtheorem{prop}[theo]{Proposition}
\title[HC-module over the Ovsienko-Roger superalgebra]{Classification of simple Harish-Chandra modules over the Ovsienko-Roger superalgebra}
\author{ Munayim Dilxat}
\address{College of Mathematics and System Sciences, Xinjiang University, Urumqi 830046, Xinjiang, China}
\email{18396830969@163.com}
\author{Liangyun Chen}
\address{School of Mathematics ans Statistics, Northest Normal University, Changchun, 130024, China}
\email{chenly640@nenu.edu.cn}
\author{Dong Liu}
\address{College of Mathematics and System Sciences, Xinjiang University, Urumqi 830046, Xinjiang, China; Department of Mathematics, Huzhou University, Zhejiang Huzhou, 313000, China}
\email{liudong@zjhu.edu.cn}
\date{}
\begin{document}
\maketitle

\begin{abstract}  With the $\Omega$-operators for the Virasoro algebra \cite{BF} and the super Virasoro algebra in \cite{CL, CLL}, we get the $\Omega$-operators for the Ovsienko-Roger superalgebras in this paper and then use it to classify all simple cuspidal modules for the $\bZ$-graded and $\frac12\bZ$-graded Ovsienko-Roger superalgebras. By this result, we  can easily classify all simple Harish-Chandra modules over some related Lie superalgebras, including the $N=1$ BMS$_3$ algebra, the super $W(2,2)$, etc.

\noindent{\bf Keywords}
Virasoro algebra, Ovsienko-Roger superalgebra, Harish-Chandra modules, Jet modules

\noindent{\it MSC} [2020]: 17B10, 17B20, 17B65, 17B66, 17B68

\end{abstract}


{\section{Introduction}

We denote by $\bZ, \bN, \bZ_+, \bC$ and $\bC^*$ the sets of all integers, non-negative integers, positive integers, complex numbers, and nonzero complex numbers, respectively. All vector spaces and algebras in this paper are over $\bC$. We denote by $U(L)$ the universal enveloping algebra of the Lie (super)algebra $L$ over $\bC$. Throughout this paper, by subalgebras, submodules for Lie superalgebras we mean subsuperalgebras and subsupermodules respectively.

Superconformal algebras may be viewed as
natural super-extensions of the Virasoro algebra and have been playing a fundamental role in  string theory and conformal field theory.  In \cite{Ka}, Kac classified all physical superconformal algebras:
namely, the $N = 0$ (the Virasoro algebra Vir), $N = 1$ (the super Virasoro algebras), $N = 2, 3$ and $4$ superconformal
algebras, the superalgebra $W(2)$ of all vector fields on the $N=2$ supercircle, and
a new superalgebra $CK(6)$. Representation theory for superconformal algebras has been the subject of intensive study.  It is a challenging problem to give complete classifications of simple weight modules with finite dimensional weight spaces for superconformal algebras.  
Based on the classification of simple jet modules introduced by Y. Billig in \cite{B} (see also \cite{Rao}), Billig and Futorny gave a complete classification of simple Harish-Chandra modules for Lie algebra of vector fields on a torus with the so-called $A$ cover theory in \cite{BF}. Recently, with the study of jet modules, such classifications were completed for many Lie superalgebras, the $N=1$ Ramond algebra in \cite{CL}, the Witt superalgebra in \cite{XL, BFI}, the affine-Virasoro superalgebra in \cite{CLW, HLW}, etc.  On the other hand, with classical methods for the classification of finite irreducible modules over the simple Lie algebras, \cite{LPX0} classified all strong Harish-Chandra modules over the $N=2$ superconformal algebras.   The above Lie superalgebras are all $\bZ$-graded. However, for the $\frac12\bZ$-graded Neveu-Schwarz algebras,  it meets many new difficulties. Recently \cite{CL} classified such modules for the $N=1$ Neveu-Schwarz algebra with some complicated constructions.

The Ovsienko-Roger Lie algebra $\widehat{\mathfrak L}_\lambda:={\rm Vir}\ltimes \mathcal F_{\lambda}$ was introduced in\cite{OR} to study the extensions of the Virasoro algebra by the densitiy module $\mathcal F_{\lambda}$. The algebras $\mathfrak L_{0}$ is  known in literature under the name the twisted Heisenberg-Virasoro algebra and
plays an important role in moduli spaces of curves \cite{ADKP}. The algebra $\mathfrak L_{-1}$ is  better known under the name the $W(2,2)$-algebra in the context of vertex operator algebras \cite{ZD} and  BMS/GCA correspondence \cite{Ba0}. Moreover with the study of the Ovsienko-Roger Lie algebra $\mathfrak L_\lambda$,  Harish-Chandra modules for many Lie algebras related to the Virasoro algebra can be classified (see \cite{L, LPX2}). 

Motivited it we introduce the Ovsienko-Roger Lie superalgebra $\widehat{\mathfrak L}(\lambda, \epsilon)={\rm Vir}\ltimes \mathcal F_{\lambda}, \epsilon=0, \frac12$, where $\mathcal F_{\lambda}=\sum_{i\in\mathbb Z+\epsilon}G_i$ is the odd part of $\widehat{\mathfrak L}(\lambda, \epsilon)$. $\widehat{\mathfrak L}(0, \epsilon)$ is named the Kuper algebras, which is connected with the super Camassa-Holm-type systems (see \cite{G}).
It is well known that $\Omega$-operators plays an important role in the classification of irreducible cuspidal modules  over many Lie (super)algebras. With the $\Omega$-operators for the Virasoro algebra \cite{BF} and the super Virasoro algebra in \cite{CL, CLL}, we get the $\Omega$-operators on the cuspidal modules for the Ovsienko-Roger superalgebras (Lemma \ref{Omegaoper-OS} below) and then give a uniform method to classify all simple Harish-Chandra modules for the $\bZ$-graded and $\frac12\bZ$-graded Ovsienko-Roger superalgebras (Theorem \ref{cuspidal} below). With this result we can easily classify all simple Harish-Chandra modules for many related Lie superalgebras (see Section 5).  We shall note that we just do our research for $\lambda=-\frac12$ in the whole paper although our calculations and proofs are all suitable for any $\lambda\in\mathbb C$.

The paper is organized as follows. In Section \ref{pre}, we collect some basic results for our study. Simple cuspidal modules are classified in Section 3.  In Section 4, we classify all simple Harish-Chandra modules for the Ovsienko-Roger superalgebra. Finally, with this classification we  can classify all simple Harish-Chandra modules over some related Lie superalgebras including the $N=1$ BMS$_3$ algebra, the super $W(2,2)$, etc. in Section \ref{final}.

\section{Preliminaries}
\label{pre}

In this section, we collect some basic definitions and results for our study.

By definition, as a vector space over $\bC$, the Virasoro algebra ${\rm Vir}$ has a basis $\{L_m, C
\mid m\in\bZ\}$, subject to the following relations:
\begin{eqnarray}\label{def1}
&&[L_m, L_n]=(n-m)L_{m+n}+\delta_{m+n, 0}{1\over 12}(m^3-m)C, \ \forall m, n\in\bZ.
\end{eqnarray}

It is well known that the intermediate series ${\rm Vir}$-modules is
$${\mathcal A}_{a,\; b}:=\sum_{i\in\bZ}\bC v_i \ {\rm with} \ L_mv_i=(a+i+bm)v_{m+i}, Cv_i=0, \forall m, i\in\bZ.$$
${\mathcal A}_{a,\; b}$ is simple if and only if $a\not\in\bZ$ or $b\ne 0, 1$. As usual, we use $\mathcal A_{a,\; b}'$ to denote by the irreducible sub-quotient module of ${\mathcal A}_{a,\; b}$ (see \cite{KS}).

If $a\in\bZ$, then ${\mathcal A}_{a,\; b}\cong {\mathcal A}_{0,\; b}$. So we always suppose that $a\notin\bZ$ or $a=0$ in ${\mathcal A}_{a,\; b}$.
As in \cite{Fuk, OR}, we denote by ${\mathcal F}_\lambda={\mathcal A}_{0,\; \lambda},  \lambda\in\bC$, which is also called density module of the Virasoro algebra.

Motivated \cite{OR}, for $\epsilon=0, \frac12$, we can define the Ovsienko-Roger superalgebra $\widehat{\mathfrak L}(\epsilon):={\rm Vir}\ltimes \mathcal F_{-\frac12}$.  More precisely $\widehat{\mathfrak L}(\epsilon)$ has a  basis $\{L_n, G_r,C\,|\, n\in\bZ, r\in\bZ+\epsilon\}$ with the brackets
\begin{align*}
[L_m,L_n]&=(n-m)L_{m+n}+\delta_{m+n,0}\frac{1}{12}(n^3-n)C,\\
[L_m, G_r]&=(r-\frac12m)G_{r+m},\\
[G_r,G_s]&=0, \ \forall m, n\in\bZ, r, s\in\bZ+\epsilon.
\end{align*}
Here we shall note that the odd part of $\widehat{\mathfrak L}(\epsilon)$ is spanned by $\{G_{n}\mid n\in\bZ+\epsilon\}$.

In the case of $\epsilon=0$, $\widehat{\mathfrak L}(0)$ can be realized as the affine-Virasoro superalgebra $\bC x\otimes \bC[t, t^{-1}]\rtimes {\rm Vir}$, where $\bC x$ is the one-dimensional abelian Lie superalgebra. All simple Harish-Chandra modules over $\widehat{\mathfrak L}(0)$ were classified in \cite{CLW}.

For the case of $\epsilon=\frac12$, it encounters many new difficulties to classify such modules with usual methods. This paper give a uniform new method to consider both cases of $\epsilon=0$ and $\epsilon=\frac12$.  For convenience,  we just write our research for the case of $\epsilon=\frac12$.
So from now denote by $\widehat{\mathfrak L}=\widehat{\mathfrak L}(\frac12)$ in brief, and $\mathfrak L$ the quotient algebra $\widehat{\mathfrak L}/\bC C$.
Clearly, $\mathfrak L$ is a $\frac{1}{2}\bZ$-graded Lie superalgebra with ${\mathfrak L}_i=\bC L_i, \forall i\in\bZ$ and ${\mathfrak L}_{i+\frac12}=\bC G_{i+\frac12}, \forall i\in\bZ$.

The subalgebra of $\mathfrak L$ spanned by $\{L_k\,|\, k\in\bZ\}$ is isomorphic to the Witt algebra $W$. The following results for $W$-modules will be used.
\begin{lemm}\label{Omegaoper}$(${\cite[Corollary 3.7]{BF}}$)$
Let $\Omega_{k, s}^{(m)}=\sum\limits_{i=0}^m(-1)^i\binom{m}{i}L_{k-i}L_{s+i}$.
For every $\ell\in\bZ_+$ there exists $m\in\bZ_+$ such that for all $k, s\in\bZ$, $\Omega_{k,s}^{(m)}$ annihilate every cuspidal $W$-module with a composition series of length $\ell$.
\end{lemm}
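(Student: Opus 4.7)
The plan is to proceed by induction on the composition length $\ell$, exploiting the fact that on any intermediate series Virasoro module the product $L_{k-i}L_{s+i}$ depends polynomially on $i$, combined with the basic identity that the $m$-th forward difference operator $\sum_{i=0}^{m}(-1)^i\binom{m}{i}$ annihilates every polynomial in $i$ of degree strictly less than $m$.

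\textbf{Base case.} For $\ell=1$ the module $M$ is simple cuspidal, so by the classification of simple cuspidal Virasoro modules it is a simple subquotient $\mathcal{A}_{a,b}'$ of some intermediate series module $\mathcal{A}_{a,b}$. A direct computation on the standard basis $\{v_j\}$ gives
$$L_{k-i}L_{s+i}v_j=\bigl(a+j+b(s+i)\bigr)\bigl(a+j+(s+i)+b(k-i)\bigr)\,v_{k+s+j},$$
which, as a function of $i$, is a polynomial of degree at most two with values in the one-dimensional space $\bC v_{k+s+j}$. Hence $\Omega_{k,s}^{(3)}$ annihilates every $\mathcal{A}_{a,b}$, and therefore every simple cuspidal $W$-module, so $m_{1}=3$ works.

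\textbf{Inductive step.} For the induction I would prove the slightly stronger statement that on every cuspidal $W$-module $M$ of composition length at most $\ell$, the vector-valued map $i\mapsto L_{k-i}L_{s+i}v\in M_{k+s+\mathrm{wt}(v)}$ is polynomial in $i$ of degree at most $2\ell$. Given a short exact sequence $0\to N\to M\to Q\to 0$ with $Q$ simple cuspidal and $N$ cuspidal of length $\ell-1$, the base case applied in $Q$ gives $\Omega_{k,s}^{(3)}M\subseteq N$, while the inductive hypothesis applied in $N$ controls the polynomial behavior of $L_{k-i}L_{s+i}$ on the submodule. Using that each weight space of $M$ is finite-dimensional (the cuspidal assumption), one concludes that $i\mapsto L_{k-i}L_{s+i}v$ is polynomial of degree at most $2\ell$ on $M$ as a map into $M_{k+s+\mathrm{wt}(v)}$, and therefore $\Omega_{k,s}^{(m)}$ vanishes on $M$ for any $m>2\ell$, uniformly in $k$ and $s$.

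\textbf{Main obstacle.} The delicate point is that a composition of two $\Omega$-operators is not itself an $\Omega$-operator of higher order, so one cannot naively concatenate annihilators across the short exact sequence $0\to N\to M\to Q\to 0$. The resolution is to work not with operator identities but with the polynomial degree of the function $i\mapsto L_{k-i}L_{s+i}v$ viewed inside the fixed finite-dimensional weight space $M_{k+s+\mathrm{wt}(v)}$, and to show that each composition factor inflates this degree by a bounded amount. This polynomial-degree bookkeeping is the technical heart of the argument, and it is precisely where the boundedness of weight multiplicities enters essentially.
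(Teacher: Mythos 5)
First, a remark on scope: the paper does not prove this statement at all --- Lemma \ref{Omegaoper} is imported verbatim as Corollary 3.7 of \cite{BF}, so there is no in-paper proof to compare yours against, and I can only assess your argument on its own terms. Your base case is correct and complete: on $\mathcal{A}_{a,b}$ one has $L_{k-i}L_{s+i}v_j=\bigl(a+j+b(s+i)\bigr)\bigl(a+j+s+i+b(k-i)\bigr)v_{k+s+j}$, a polynomial of degree at most $2$ in $i$ with values in the line $\bC v_{k+s+j}$, so $\Omega^{(3)}_{k,s}$ annihilates every intermediate series module and hence, by Mathieu's classification, every simple cuspidal $W$-module.

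The inductive step, however, contains a genuine gap, located exactly where you yourself say the ``technical heart'' lies. From $0\to N\to M\to Q\to 0$ you correctly deduce that the third difference of $g(i)=L_{k-i}L_{s+i}v$ takes values in $N$, and your inductive hypothesis controls the functions $i\mapsto L_{k-i}L_{s+i}w$ for vectors $w\in N$. But $\Delta^3g(i)=\Omega^{(3)}_{k-i,s+i}v$ is not of the form $L_{k-i}L_{s+i}w$ for any fixed $w\in N$, so the inductive hypothesis says nothing about it as a function of $i$; and a function from $\bZ$ to a finite-dimensional space need not be polynomial, so finite-dimensionality of the weight spaces cannot by itself ``conclude'' polynomiality of $g$. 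The assertion that each composition factor inflates the degree by a bounded amount is precisely the statement that must be proved, and no argument is offered for it. Closing the induction requires structural input about the differentiators themselves that is absent from your sketch: for example, using Pascal's rule in the form $\Omega^{(m)}_{a,t-a}=\Omega^{(m-1)}_{a,t-a}-\Omega^{(m-1)}_{a-1,t-a+1}$ and the bracket $[L_m,L_n]=(n-m)L_{m+n}$, one checks that $[L_p,\Omega^{(m)}_{k,s}]$ is again a linear combination of operators $\Omega^{(m)}_{k',s'}$ with $k'+s'=k+s+p$, so that $\sum_{k,s}\Omega^{(m)}_{k,s}M$ is a $W$-submodule of $M$; it is this kind of invariance, exploited in \cite{BF}, that makes an induction on composition length workable. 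As written, your proposal establishes only the case $\ell=1$.
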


All simple $W$-modules with finite dimensional weight spaces  were  classified in \cite{Ma}.

\begin{theo}\label{thm-vir}\cite{Ma}
Let $V$ be a simple  $W$-module with finite dimensional weight spaces.
Then $V$ is a highest weight module, lowest weight module, or $\mathcal A_{a, b}'$ for some $a, b\in\bC$.
\end{theo}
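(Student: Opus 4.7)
The plan is to follow Mathieu's strategy, which splits according to the shape of the support $\mathrm{Supp}(V)\subseteq\bC$. Because $V$ is simple, $L_0$ acts diagonally with finite-dimensional weight spaces, and $[L_0,L_n]=nL_n$, the support lies in a single coset $a+\bZ$ and is a connected subset thereof. Hence there are four possibilities: $V$ is finite-dimensional, $\mathrm{Supp}(V)$ is bounded above only, bounded below only, or $\mathrm{Supp}(V)=a+\bZ$.

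First I would treat the extremal cases. If $\mathrm{Supp}(V)$ is bounded above, any nonzero weight vector of maximal weight is annihilated by every $L_n$ with $n>0$ and, by simplicity, generates $V$; hence $V$ is a highest weight module. Symmetrically, a bounded-below support forces $V$ to be a lowest weight module. The finite-dimensional case collapses to the trivial module, since $W$ has no nontrivial finite-dimensional simple modules; the trivial module is realised as a subquotient of $\mathcal{A}_{0,0}$, so it is covered by the third clause of the theorem.

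The heart of the argument is the remaining case $\mathrm{Supp}(V)=a+\bZ$, where the main step is to establish the Mathieu bound: the weight multiplicities $\dim V_\mu$ are uniformly bounded. I would prove this by analysing the action of the $\mathfrak{sl}_2$-triple $\{L_{-1},L_0,L_1\}$ together with the shift operators $L_n$ on finite slices $\bigoplus_{|i|\le N}V_{a+i}$. The key observation is that if multiplicities grew without bound, one could extract a proper nonzero submodule bounded on one side, contradicting simplicity; once uniform boundedness is established, $V$ is a simple cuspidal $W$-module.

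The final step is to identify a simple cuspidal $V$ with $\mathcal{A}_{a,b}'$. By Lemma \ref{Omegaoper}, for sufficiently large $m$ the operators $\Omega_{k,s}^{(m)}$ annihilate $V$, yielding polynomial identities among the matrix entries of the generators $L_n$ acting on a fixed weight space. Solving these identities and choosing a compatible basis $\{v_i\}$ of the weight spaces $V_{a+i}$ forces $L_n v_i=(a+i+bn)v_{n+i}$ for some $b\in\bC$, so $V\cong\mathcal{A}_{a,b}'$. The main obstacle is this last step: the Mathieu bound is delicate but essentially standard, whereas extracting the cuspidal structure from the $\Omega$-operator relations requires care to exclude degenerate zero-shift behaviours and to handle the reducible parameter values $b\in\{0,1\}$, in which only the simple subquotient rather than the full intermediate module is recovered.
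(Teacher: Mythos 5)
This statement is quoted in the paper directly from Mathieu's paper [Ma]; the paper offers no proof of it, so there is no in-paper argument to compare against. Judged on its own terms, your outline follows the modern Billig--Futorny-style route (reduce to the cuspidal case, then kill cuspidal modules with the $\Omega$-operators of Lemma \ref{Omegaoper}) rather than Mathieu's original reduction-mod-$p$ argument. That route is legitimate and non-circular, since Billig--Futorny's Corollary 3.7 is proved by their differentiator machinery independently of Mathieu's theorem, and it is in fact the strategy the present paper mimics for the superalgebra $\mathfrak L$. However, as written your proposal has genuine gaps precisely where the difficulty lies.

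First, the claim that $\mathrm{Supp}(V)$ is a \emph{connected} subset of $a+\bZ$ does not follow merely from $[L_0,L_n]=nL_n$ and simplicity; one must rule out interior gaps in the support, which is a separate (standard but nontrivial) argument of Mazorchuk--Zhao type. Second, and more seriously, the step from ``$\Omega_{k,s}^{(m)}$ annihilates $V$'' to ``there is a basis $\{v_i\}$ with $L_nv_i=(a+i+bn)v_{n+i}$'' is asserted, not proved. This is the core of the theorem: one must show that the $\Omega$-identities force the action of $L_n$ to be polynomial in $n$ of bounded degree (equivalently, that $V$ carries a $\bC[t,t^{-1}]\rtimes W$-module, i.e.\ jet-module, structure on a cover), that simplicity then forces all weight multiplicities to equal one, and only then does the degree-one form with a single parameter $b$ drop out. ``Solving these identities and choosing a compatible basis'' hides several pages of work, including the uniform bound $\dim V_\mu\le 1$, which nothing in your sketch addresses. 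The dichotomy argument (non-cuspidal $\Rightarrow$ a weight vector killed by $L_k,L_{k+1}$, hence by all $L_n$ with $n\gg0$, hence bounded support) is essentially correct and matches Lemma \ref{appN=1} of the paper, but by itself it only reduces the problem to the cuspidal case; it does not close it.
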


\section{Simple cuspidal $\mathfrak L$-module}

In this section, we shall consider cuspidal $\mathfrak L$-modules. The following result was given in \cite{CL} (also see \cite{CLL}).
\begin{lemm}\label{Omegaoper-S}\cite{CL}
Let $V$ be a cuspidal
${\mathfrak L}$-module. Then there exists $m\in\bZ_+$ such that for all $r\in\bZ+\frac12, s\in\bZ$, $\overline{\Omega}_{r, s}^{(m)}$ annihilate $V$, where
$\overline{\Omega}_{r, s}^{(m)}=\sum\limits_{i=0}^m(-1)^i\binom{m}{i}G_{r-i}L_{s+i}$.
\end{lemm}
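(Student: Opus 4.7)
The plan is to bootstrap Lemma~\ref{Omegaoper} to the super setting by commuting a single odd generator $G_t$ past $\Omega_{k,s}^{(m)}$.

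First, by restriction $V$ is a cuspidal $W$-module, and the subspace $U := \sum_{t\in\bZ+\frac12} G_t V$ is also a $W$-submodule of $V$, since $L_k(G_t v) = G_t L_k v + (t-\tfrac{k}{2})G_{k+t}v \in U$; hence $U$ is itself cuspidal. Applying Lemma~\ref{Omegaoper} to both $V$ and $U$ and taking the larger of the two resulting exponents, one obtains a single $m \in \bZ_+$ for which $\Omega_{k,s}^{(m)}$ annihilates $V$ and $U$ simultaneously for every $k, s \in \bZ$.

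Second, for any $v \in V$ and $t \in \bZ + \tfrac12$, the membership $G_tv \in U$ forces $\Omega_{k,s}^{(m)}(G_tv)=0$, while $\Omega_{k,s}^{(m)}v=0$ in $V$ gives $G_t(\Omega_{k,s}^{(m)}v)=0$; therefore $[\Omega_{k,s}^{(m)},G_t]v = 0$. Expanding the commutator via $[L_n,G_t] = (t-\tfrac{n}{2})G_{n+t}$, reordering each $L_{k-i}G_{s+i+t}$ as $G_{s+i+t}L_{k-i}+\text{(scalar)}\cdot G_{k+s+t}$ (the $G_{k+s+t}$ piece carries an alternating-binomial coefficient $\sum_i(-1)^i\binom{m}{i}p(i)$ with $\deg p=2$, which vanishes once $m\ge 3$), and applying $i\binom{m}{i} = m\binom{m-1}{i-1}$ to split the linear-in-$i$ factors, one is led to an identity of the form
\[
(t-\tfrac{k}{2})\,\overline{\Omega}_{r,\,s}^{(m)}v \;=\; \tfrac{m}{2}\,\overline{\Omega}_{r-1,\,s+1}^{(m-1)}v \;-\; (-1)^m\bigl[(t-\tfrac{s+m}{2})\,\overline{\Omega}_{r',\,s'}^{(m)}v - \tfrac{m}{2}\,\overline{\Omega}_{r'-1,\,s'+1}^{(m-1)}v\bigr],
\]
where $r := k+t$, $r' := s+t+m$, $s' := k-m$, and $r+s = r'+s'$.

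Finally, fix $(r, s)$ and view this as one equation per $t \in \bZ+\tfrac12$ (via $k = r-t$). The coefficient $t-\tfrac{k}{2}=\tfrac{3}{2}t-\tfrac{r}{2}$ of the target term is linear in $t$ and nonzero for all but at most one value, whereas every $\overline{\Omega}$-vector on the right lies in the single finite-dimensional weight space $V_{\mathrm{wt}(v)+r+s}$ by cuspidality. A finite-dimensional linear-algebra elimination over enough choices of $t$, combined with the Pascal recursion $\overline{\Omega}_{r,s}^{(m)} = \overline{\Omega}_{r,s}^{(m-1)} - \overline{\Omega}_{r-1,s+1}^{(m-1)}$ used to propagate vanishing between consecutive orders, then yields $\overline{\Omega}_{r,s}^{(m)}v = 0$ for a single uniform $m$. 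The main obstacle is precisely this last step: arranging that one $m$ simultaneously kills $\Omega_{k,s}^{(m)}$ on $V$ and $U$ \emph{and} makes the elimination succeed uniformly for every $r \in \bZ+\tfrac12$, $s \in \bZ$, and $v \in V$.
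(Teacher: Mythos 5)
The paper does not actually prove this lemma; it is quoted from \cite{CL} (see also \cite{CLL}), where it is obtained by a different mechanism: one constructs a cover of $V$ inside $\mathrm{Hom}_{\bC}(\mathfrak{L}_{\bar 1}, V)$ which is again a cuspidal module over the Witt algebra, applies Lemma \ref{Omegaoper} to that cover, and pushes the resulting identities down along the evaluation map. Your opening moves are fine as far as they go: since $G_tv\in V$ and $\Omega^{(m)}_{k,s}V=0$, you do get $[\Omega^{(m)}_{k,s},G_t]v=0$ (the auxiliary module $U$ is redundant, since $U\subseteq V$ is already killed by $\Omega^{(m)}_{k,s}$), and your expansion of the commutator --- including the vanishing of the quadratic alternating sum for $m\ge 3$ and the reduction via $i\binom{m}{i}=m\binom{m-1}{i-1}$ --- is correct.

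The gap is the final step, and it is not merely presentational. The identity you derive contains two order-$m$ unknowns, $\overline{\Omega}^{(m)}_{r,s}v$ and $\overline{\Omega}^{(m)}_{s+t+m,\,r-t-m}v$, whose coefficients $\tfrac32 t-\tfrac r2$ and $\pm\bigl(t-\tfrac{s+m}2\bigr)$ grow at the same linear rate in $t$, so no leading-term or generic-$t$ argument isolates the target; and it contains order-$(m-1)$ terms that are not known to vanish, since Lemma \ref{Omegaoper} supplies one specific $m$ and the Pascal recursion $\overline{\Omega}^{(m)}_{r,s}=\overline{\Omega}^{(m-1)}_{r,s}-\overline{\Omega}^{(m-1)}_{r-1,s+1}$ only transports vanishing from order $m-1$ up to order $m$, never downward --- so invoking it to ``propagate vanishing between consecutive orders'' is circular here. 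Finally, the finite-dimensionality of $V_{\mathrm{wt}(v)+r+s}$ yields linear dependencies among the infinitely many vectors $\overline{\Omega}^{(m)}_{a,\,r+s-a}v$, not their vanishing. As written, your system of identities is consistent with these vectors being nonzero, so the conclusion does not follow; closing the argument needs a genuinely new input, such as the cover construction of \cite{CL, CLL}.
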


\begin{lemm}\label{Omegaoper-OS} Let $V$ be a cuspidal
${\mathfrak L}$-module. Then there exists $m\in\bZ_+$ such that
$\underline{\Omega}_{r, s}^{(m)}$ annihilate $V$, where
\begin{equation}
\underline{\Omega}_{r, s}^{(m)}:=\sum\limits_{i=0}^m(-1)^i\binom{m}{i}G_{r-i}G_{s+i}, \ \forall r, s\in\bZ+\frac12. \label{omega3}
\end{equation}
\end{lemm}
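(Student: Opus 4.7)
The plan is to derive $\underline{\Omega}_{r,s}^{(m)}$ from the known annihilator $\overline{\Omega}_{r,\tau}^{(m)}$ of Lemma~\ref{Omegaoper-S} by taking a super-commutator against one more odd generator $G_t$. The point is that if $XV=0$, then $[G_t,X]V=0$ automatically, so any such bracket is again a cuspidal annihilator. It therefore suffices to exhibit $\underline{\Omega}_{r,s}^{(m)}$ as a linear combination of brackets of the form $[G_t,\overline{\Omega}_{r,\tau}^{(m)}]$, up to an error term that can be killed by varying $t$.

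Fix $r,s\in\bZ+\tfrac12$ and let $m$ be the integer provided by Lemma~\ref{Omegaoper-S}. For each $t\in\bZ+\tfrac12$ we have $s-t\in\bZ$, so $\overline{\Omega}_{r,s-t}^{(m)}V=0$, and hence $[G_t,\overline{\Omega}_{r,s-t}^{(m)}]V=0$. Using the super-Leibniz rule $[G_t,G_{r-i}L_{s-t+i}]=[G_t,G_{r-i}]L_{s-t+i}-G_{r-i}[G_t,L_{s-t+i}]$ (the sign is $-$ because both $G_t$ and $G_{r-i}$ are odd), together with $[G_r,G_s]=0$ and $[L_m,G_r]=(r-\tfrac{m}{2})G_{m+r}$, a direct computation gives
\begin{equation*}
[G_t,\,G_{r-i}L_{s-t+i}]=\frac{3t-s-i}{2}\,G_{r-i}G_{s+i}.
\end{equation*}
Summing against $(-1)^i\binom{m}{i}$, splitting $3t-s-i=(3t-s)-i$, and using the standard identity $i\binom{m}{i}=m\binom{m-1}{i-1}$ followed by the shift $j=i-1$, the $i$-dependent portion reassembles into a shifted $\underline{\Omega}$; the outcome is
\begin{equation*}
[G_t,\overline{\Omega}_{r,s-t}^{(m)}]=\frac{3t-s}{2}\,\underline{\Omega}_{r,s}^{(m)}+\frac{m}{2}\,\underline{\Omega}_{r-1,s+1}^{(m-1)}.
\end{equation*}

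The crucial feature is that the second summand on the right is independent of $t$. Selecting any two distinct half-integers $t_1\neq t_2$ and subtracting the two identities makes the $\underline{\Omega}_{r-1,s+1}^{(m-1)}$ term cancel, yielding $\tfrac{3(t_1-t_2)}{2}\,\underline{\Omega}_{r,s}^{(m)}V=0$; since $t_1\neq t_2$, this gives $\underline{\Omega}_{r,s}^{(m)}V=0$. Note that the same $m$ as in Lemma~\ref{Omegaoper-S} works, which will be convenient when iterating.

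No step poses a genuine obstacle; the only things to watch are the sign coming from the super-Leibniz rule and the clean reindexing that recognises $\sum (-1)^i i\binom{m}{i}(\cdots)$ as $-m\,\underline{\Omega}_{r-1,s+1}^{(m-1)}$. The conceptual content is the observation that bracketing with a single $G_t$ produces an expression that is \emph{affine linear} in $t$, so two well-chosen values of $t$ suffice to isolate the desired operator.
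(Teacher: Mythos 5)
Your proposal is correct and follows essentially the same route as the paper: apply (or super-bracket with) one extra odd generator $G_t$ to the identity $\overline{\Omega}^{(m)}_{r,\tau}V=0$ of Lemma \ref{Omegaoper-S}, observe that the result is affine linear in $t$ with the $t$-independent part not depending on the choice of $t$, and eliminate it by subtracting the identities for two distinct values $t_1\neq t_2$. Your version just makes explicit the coefficient computation and the identification of the $t$-independent remainder as $\tfrac{m}{2}\,\underline{\Omega}^{(m-1)}_{r-1,s+1}$, which the paper leaves implicit.
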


\begin{proof}

For the cuspidal module $V$, by Lemma \ref{Omegaoper-S}, there exists $m\in\bZ_+$ such that for all $r\in\bZ+\frac12, s\in\bZ$, $\overline{\Omega}_{r, s}^{(m)}V=0$, it is
\begin{equation}
\sum\limits_{i=0}^m(-1)^i\binom{m}{i}G_{r-i}L_{s+i}V=0, \ \forall r\in\bZ+\frac12, s\in\bZ. \label{omega1}
\end{equation}

By action of $G_t$ on \eqref{omega1}, we get
\begin{equation}
\frac12\sum\limits_{i=0}^m(-1)^i\binom{m}{i}(s+i-2t)G_{r-i}G_{s+t+i}V=0, \ \forall r\in\bZ+\frac12, s\in\bZ. \label{omega2}
\end{equation}

Choosing $t=t_1, t_2, t_1\ne t_2$ in \eqref{omega2}, we get the lemma.
\end{proof}

\begin{lemm}\label{nilplemma} Let $V$ be a simple cuspidal
${\mathfrak L}$-module. Then there exists $N\in\bZ_+$ such that $\mathfrak L_{\bar1}^NV=0$.
\end{lemm}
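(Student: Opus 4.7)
The plan is to upgrade Lemma~\ref{Omegaoper-OS} into a multilinear polynomial identity for products $G_{r_1}\cdots G_{r_N}$ acting on $V$, and then exploit the antisymmetry forced by $[G_r,G_s]=0$ (the odd generators anticommute in $U(\mathfrak L)$) through a Vandermonde divisibility argument. Fix $m\in\bZ_+$ as in Lemma~\ref{Omegaoper-OS}. Since $\underline\Omega_{r,s}^{(m)}V=0$ holds as an identity of operators, we may left- and right-multiply it by arbitrary strings of $G$'s; setting $\Psi_N(r_1,\dots,r_N):=G_{r_1}\cdots G_{r_N}|_V$, this yields, for every $N\ge 2$ and every $k\in\{1,\dots,N-1\}$,
\begin{equation*}
\sum_{i=0}^{m}(-1)^i\binom{m}{i}\Psi_N(r_1,\dots,r_{k-1},r_k-i,r_{k+1}+i,r_{k+2},\dots,r_N)=0,
\end{equation*}
i.e., the $m$-th discrete difference of $\Psi_N$ in the lattice direction $e_{k+1}-e_k$ vanishes identically.

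Next, I restrict $\Psi_N$ to a constant-sum sublattice $L_n=\{r\in(\bZ+\frac12)^N:\sum r_i=n\}$, parametrized by $(r_1,\dots,r_{N-1})$ with $r_N=n-r_1-\cdots-r_{N-1}$. The $N-1$ shift vectors $e_{k+1}-e_k$ form a unimodular $\bZ$-basis of the translation lattice of $L_n$ (the associated matrix is bidiagonal with $\pm1$ entries), so in the dual coordinates $t_1,\dots,t_{N-1}$ attached to this basis, $\Psi_N$ is a polynomial of degree less than $m$ in each $t_k$; in particular its total degree is at most $(N-1)(m-1)$. Since the passage from $(t_1,\dots,t_{N-1})$ to $(r_1,\dots,r_{N-1})$ is a linear change of variables, it preserves total degree, so $\Psi_N$ is a polynomial in $(r_1,\dots,r_{N-1})$ of total degree at most $(N-1)(m-1)$. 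Because the $G$'s anticommute in $U(\mathfrak L)$, $\Psi_N$ is antisymmetric under the full $S_N$-action on $(r_1,\dots,r_N)$, and in particular antisymmetric under the $S_{N-1}$-action on $(r_1,\dots,r_{N-1})$.

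Every antisymmetric polynomial in $N-1$ variables is divisible by the Vandermonde $\prod_{1\le i<j\le N-1}(r_i-r_j)$, whose total degree is $\binom{N-1}{2}=(N-1)(N-2)/2$. So whenever the total degree of $\Psi_N$ is strictly less than $\binom{N-1}{2}$, we must have $\Psi_N=0$. The inequality $(N-1)(m-1)<(N-1)(N-2)/2$ reduces to $N>2m$; taking $N=2m+1$ therefore forces $\Psi_N\equiv0$ on every $L_n$, so $G_{r_1}\cdots G_{r_N}V=0$ for every tuple $(r_1,\dots,r_N)$, which is exactly $\mathfrak L_{\bar 1}^N V=0$. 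The main technical step is the bookkeeping in the middle paragraph --- converting the $N-1$ one-dimensional finite-difference relations (which act only along the antidiagonals $e_{k+1}-e_k$) into a total-degree bound in the $r$-parametrization of $L_n$ via the unimodular change of basis. Once the polynomial structure and the $S_{N-1}$-antisymmetry are both in hand, the Vandermonde divisibility dispatches the conclusion in one line.
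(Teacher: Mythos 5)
Your proof is correct, but it takes a genuinely different route from the paper's. The paper argues by hand: right-multiplying \eqref{omega3} by monomials such as $G_{s+1}\cdots G_{s+m}$ kills all but one or two terms of the sum (because $G_t^2=0$), which gives \eqref{nilp-1} and \eqref{nilp-j}, i.e.\ the vanishing of products of $m+2$ factors whose indices lie in a short window $\mathcal O_{m+1}$; an induction on the length of the window, using \eqref{omega3} once more to rewrite $G_{r_0}G_{s+n+1}$ modulo products with smaller index spread, then extends this to arbitrary indices and yields $N=m+2$. You instead encode the same information globally: the $\Omega$-identity, sandwiched inside a length-$N$ product (legitimate, since $\underline{\Omega}_{r,s}^{(m)}$ annihilates all of $V$ and the right factor maps $V$ into $V$), says that $(r_1,\dots,r_N)\mapsto G_{r_1}\cdots G_{r_N}|_V$ has vanishing $m$-th difference along each direction $e_{k+1}-e_k$, hence is polynomial of total degree at most $(N-1)(m-1)$ on each constant-sum slice, while anticommutativity of the odd generators makes it $S_{N-1}$-antisymmetric and therefore divisible by a Vandermonde factor of degree $\binom{N-1}{2}$; comparing degrees forces vanishing once $N>2m$. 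The supporting steps check out: the $N-1$ vectors $e_{k+1}-e_k$ do form a basis of the translation lattice of the slice, the passage from separate degree bounds in the $t_k$ to a total-degree bound in the $r_i$ is preserved under the affine-invertible bidiagonal change of coordinates, and Vandermonde divisibility applies to vector-valued antisymmetric polynomials componentwise. Your bound $N=2m+1$ is slightly weaker than the paper's $N=m+2$, which is immaterial for the statement; in exchange, your argument is more conceptual, avoids the case analysis of the window induction, and is essentially the polynomiality-versus-antisymmetry mechanism familiar from the jet-module literature, which generalizes more readily to higher-rank settings.
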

\begin{proof}
By Lemma \ref{Omegaoper-OS}, we can get \eqref{omega3}. Multiply  both sides of \eqref{omega3} by  $G_{s+1} G_{s+2}\cdots G_{s+m}$,  
$G_{r-j+1}\cdots G_{r-1}G_rG_{s+j+1}\cdots G_{s+m}, 1\le j\le m$ to get

\begin{align}
&G_{r}G_{s}G_{s+1}\cdots G_{s+m}V=0,\label {nilp-1} \\
&G_{r-j}\cdots G_{r-1}G_rG_{s+j}G_{s+j+1}\cdots G_{s+m}V=0,\ \forall 1\le j\le m. \label {nilp-j}
\end{align}

Fix some $s\in\bZ+\frac12$ and set ${\mathcal O}_n=\{s, s+1, s+2, \cdots, s+n\}$. By \eqref{nilp-1} the following identity
\begin{equation}
G_{r_0}G_{r_1}\cdots G_{r_{m+1}}V=0 \label{nilp}
\end{equation}
holds for all  $r_0, r_1, \cdots, r_{m+1}\in {\mathcal O}_{m+1}$.

By \eqref{nilp-1} and \eqref{nilp-j} we see that $\eqref{nilp}$ holds  for all  $r_0, r_1, \cdots, r_{m+1}\in {\mathcal O}_{m+2}$.

We shall use the induction on $k$ to prove that $$G_{r_0}G_{r_1}\cdots G_{r_{m+1}}V=0$$ for all  $r_0, r_1,\cdots, r_{m+1}\in {\mathcal O}_{m+k}$ for all $k\ge 1$.
Then, according to the arbitariness of $s$, we get the lemma by choosing $N=m+2$.

Suppose that \eqref{nilp} holds for all  $r_0, r_1, \cdots, r_{m+1}\in {\mathcal O}_{n}$ and some $n>m+1$.  Now we shall prove that
\begin{equation}
G_{r_0}G_{r_1}\cdots G_{r_m}G_{s+n+1}V=0 \label{nilp-n+1}
\end{equation}
holds for all  $r_0<r_1<\cdots<r_{m}\in {\mathcal O}_{n}$.

\noindent{\bf Case 1.}  $r_0=s+n-m$.

In this case $r_i=s+n-m+i$ for any $i=1, 2, \ldots, m$.  So \eqref{nilp-n+1} follows from \eqref{nilp-1} directly.

\noindent{\bf Case 2.}  $r_0=s+n-m-k$ for some $1\le k\le n-m$.

Replaced by $s, r$ by $s+n-m+1, s+n-k$ in \eqref{omega3}, respectively, we get
\begin{align*}
&\big(G_{s+n-k}G_{s+n-m+1}-\binom{m}{1}G_{s+n-k-1}G_{s+n-m+2}\\
&+\cdots+(-1)^{m-1}\binom{m}{m-1}G_{s+n-k-m+1}G_{s+n}+(-1)^mG_{r_0}G_{s+n+1}\big)V=0.
\end{align*}
So we get
\begin{equation}
G_{r_0}G_{s+n+1}V\subset (\sum_{r_i, r_j\in\mathcal O_n} G_{r_i}G_{r_j})V.
\end{equation}
In this case  \eqref{nilp-n+1} follows by inductive hypothesis.
\end{proof}

\begin{theo}\label{cuspidal}
Let $V$ be a simple cuspidal
${\mathfrak L}$-module. Then $V$ is isomorphic to the Harish-Chandra module of the intermediate series: $V=\sum v_i\cong {\mathcal A}_{a, b}'$ for some $a, b\in\c$ with
$L_mv_i=(a+i+bm)v_{m+i},  G_rv_i=0$
for all $m,i\in\z, r\in\bZ+\frac12$.
\end{theo}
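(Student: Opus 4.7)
The plan is to use the nilpotency of $\mathfrak L_{\bar 1}$ supplied by Lemma \ref{nilplemma} to force every $G_r$ to annihilate $V$; once this is achieved, $V$ collapses to a simple cuspidal module over the Witt subalgebra $W$, and the statement follows from Theorem \ref{thm-vir}.

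The first step is the main point. I would introduce the subspace
$$K := \{v \in V \mid G_r v = 0 \text{ for all } r \in \bZ + \tfrac{1}{2}\}.$$
Using the bracket $[L_m, G_r] = (r - \tfrac{m}{2})G_{r+m}$ together with the vanishing of every $G_s$ on $K$, one checks immediately that $L_m K \subseteq K$; since also $G_s K = 0$ and $K$ is $\bZ_2$-graded, $K$ is a subsupermodule of $V$. To see that $K$ is nonzero, let $N$ be minimal with $\mathfrak L_{\bar 1}^N V = 0$ (which exists by Lemma \ref{nilplemma}). Then $\mathfrak L_{\bar 1}^{N-1}V \ne 0$, and every element of $\mathfrak L_{\bar 1}^{N-1}V$ is sent into $\mathfrak L_{\bar 1}^N V = 0$ by each $G_s$, so $\mathfrak L_{\bar 1}^{N-1}V \subseteq K$. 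Simplicity of $V$ then forces $K = V$, i.e., $G_r V = 0$ for all $r \in \bZ + \tfrac{1}{2}$.

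With $\mathfrak L_{\bar 1}$ acting trivially, each of $V_{\bar 0}$ and $V_{\bar 1}$ is $W$-stable and automatically annihilated by $\mathfrak L_{\bar 1}$, hence each is itself a subsupermodule; by simplicity $V$ is concentrated in a single parity, so the notions of simple $\mathfrak L$-submodule and simple $W$-submodule coincide, and $V$ is a simple cuspidal $W$-module. Theorem \ref{thm-vir} then applies: a nontrivial highest- or lowest-weight $W$-module has unbounded weight multiplicities and cannot be cuspidal, leaving $V \cong \mathcal A_{a,b}'$ for some $a, b \in \bC$. This produces the asserted action $L_m v_i = (a+i+bm)v_{m+i}$ together with $G_r v_i = 0$. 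I do not anticipate a serious obstacle here; the delicate ingredient, namely the uniform nilpotency of the odd generators extracted from the $\underline{\Omega}$-identity, has already been packaged in Lemmas \ref{Omegaoper-OS} and \ref{nilplemma}, so the remainder is essentially an assembly argument exploiting the superalgebra simplicity.
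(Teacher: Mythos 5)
Your proposal is correct and follows essentially the same route as the paper: both deduce $\mathfrak L_{\bar 1}V=0$ from the nilpotency in Lemma \ref{nilplemma} together with simplicity, and then invoke Theorem \ref{thm-vir}. The only cosmetic difference is that you realize this via the kernel submodule $K=\{v\mid G_rv=0\}$, whereas the paper uses the descending chain of submodules $\mathfrak L_{\bar 1}^iV$ and the dichotomy $\mathfrak L_{\bar 1}V=V$ or $0$.
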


\begin{proof}
Clearly $\dim\,V_i\le p$ for some positive integer $p$ holds for almost $i\in\z$ and $C$ acts on $V$ as zero (see \cite{KS}). Now ${\mathfrak L}_{\bar1}^iV$ is ${\mathfrak L}$-submodule since ${\mathfrak L}_{\bar1}^{i+1}V\subset {\mathfrak L}_{\bar1}^iV$ for all $i\in\n$. So ${\mathfrak L}_{\bar1}V=V$ or ${\mathfrak L}_{\bar1}V=0$.

By Lemma \ref{nilplemma}, we get
\begin{equation}{\mathfrak L}_{\bar1}^NV=0. \label{grg01}\end{equation}
If ${\mathfrak L}_{\bar1}V=V$ then ${\mathfrak L}_{\bar1}^NV=V=0$, which is a contradiction.
So ${\mathfrak L}_{\bar1}V=0$ and the proposition follows from  Theorem \ref{thm-vir}.
\end{proof}

\section{Simple Harish-Chandra module}

Now we can classify all simple Harish-Chandra modules over $\widehat{\mathfrak L}$.
The following result is well-known.
\begin{lemm}\label{weightupper}
Let $M$ be a weight module with finite dimensional weight spaces for the Virasoro algebra with $\mathrm{supp}(M)\subseteq\lambda+\bZ$. If for any $v\in M$, there exists $N(v)\in\bN$ such that $L_iv=0, \forall i\geq N(v)$, then $\mathrm{supp}(M)$ is upper bounded.
\end{lemm}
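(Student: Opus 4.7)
The plan is to argue by contradiction: suppose $\mathrm{supp}(M)$ is not upper bounded. Since each weight space is finite-dimensional, choosing a basis of $M_\mu$ and taking the maximum of $N(\cdot)$ over it yields a uniform integer $N^* = N^*_\mu$ with $L_i M_\mu = 0$ for all $i \geq N^*$. Fix such a $\mu \in \mathrm{supp}(M)$; by assumption there exist arbitrarily large $k$ with $M_{\mu+k} \neq 0$.

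The heart of the argument is a double vanishing extracted from the Virasoro commutator. For $w \in M_{\mu+k}$ nonzero and $1 \leq j \leq k - \max(N^*, N(w))$, applying $[L_{k-j}, L_{-k}] = -(2k-j) L_{-j}$ to $w$ gives zero on the left, because $L_{k-j} L_{-k} w = 0$ (as $L_{-k} w \in M_\mu$ and $k - j \geq N^*$) and $L_{-k} L_{k-j} w = 0$ (as $L_{k-j} w = 0$). Hence $L_{-j} w = 0$ for every $j$ in this range. Combining with $L_j w = 0$ for $j \geq N(w)$, I conclude $[L_j, L_{-j}] w = 0$ for every integer $j$ in the interval $[N(w), k - \max(N^*, N(w))]$. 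Expanding $[L_j, L_{-j}] = -2j L_0 + \frac{j^3-j}{12} C$ and evaluating on $w$ forces $C w = \frac{24(\mu+k)}{j^2 - 1} w$ for each such $j \geq 2$.

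If this interval contains two distinct integers $j_1 \neq j_2$ with $j_1, j_2 \geq 2$, the two forced values of $Cw$ disagree (since $\mu+k \neq 0$ for $k$ large), forcing $w = 0$; this contradiction implies that every nonzero $w \in M_{\mu+k}$ must satisfy $N(w) \geq k/2$ once $k$ is sufficiently large. The proof is then finished by an iteration: starting from any nonzero $w_0 \in M_{\mu+k_0}$ with $k_0$ large inside the unbounded support, set $w_{j+1} = L_{N(w_j)-1} w_j$, which is nonzero by the minimality of $N(w_j)$ and lies in $M_{\mu+k_{j+1}}$ with $k_{j+1} = k_j + N(w_j) - 1 \geq \tfrac{3}{2} k_j - 1$. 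Since $L_n$ for $n \geq 1$ does not raise the annihilation index (a direct check from $[L_i, L_n] = (n-i) L_{i+n}$), one has $N(w_{j+1}) \leq N(w_j) \leq N(w_0)$, while the lower bound yields $N(w_j) \geq k_j/2 \to \infty$; these are incompatible, which completes the contradiction. The main technical subtlety is handling the central element $C$ without assuming it acts as a scalar: the point is that the argument directly produces incompatible scalar equations for $Cw$ on a single vector, so one never needs to split $M$ into $C$-eigenspaces.
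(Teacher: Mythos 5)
The paper offers no proof of Lemma \ref{weightupper} at all --- it is simply labelled ``well-known'' and invoked in the proof of Lemma \ref{appN=1} --- so there is no in-paper argument to compare against; what you have done is supply the missing folklore proof, and after checking it I believe it is correct. The two key steps both hold up: (i) for $w\in M_{\mu+k}$ and $1\le j\le k-\max(N^*,N(w))$ the identity $[L_{k-j},L_{-k}]w=-(2k-j)L_{-j}w$ together with $L_{k-j}L_{-k}w=L_{-k}L_{k-j}w=0$ and $2k-j\neq 0$ does give $L_{-j}w=0$, and then $[L_j,L_{-j}]w=0$ forces $\frac{j^3-j}{12}Cw=2j(\mu+k)w$, so two admissible values $j_1\neq j_2\ge 2$ are incompatible unless $\mu+k=0$ or $w=0$; (ii) the iteration is sound because $L_iL_nw=L_nL_iw+(n-i)L_{i+n}w$ shows $N(L_nw)\le N(w)$ for $n\ge 1$, while $N(w_j)\ge k_j/2\to\infty$ contradicts $N(w_j)\le N(w_0)$. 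Two small points you should make explicit: you must fix $N(w)$ to be the \emph{minimal} admissible integer from the start (you only invoke minimality late, when claiming $L_{N(w_j)-1}w_j\neq 0$), and the inequality you actually derive is $k\le\max(N^*,N(w))+\max(N(w),2)$, which gives $N(w)\ge k/2$ only after discarding the case $N(w)\le\max(N^*,2)$ for large $k$ --- worth a sentence. A cosmetic simplification: once you know $L_{\pm j}w=L_{\pm(j+1)}w=0$ for a single $j\ge 1$ in the interval, these four elements generate the whole centreless algebra and in particular $-2L_0w=[L_1,L_{-1}]w=0$, so $\mu+k=0$, giving the contradiction without ever touching $C$; but your comparison of the two forced eigenvalues of $C$ is equally valid.
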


\begin{lemm}\label{appN=1}
Suppose $M$ is a simple weight $\widehat{\mathfrak L}$-module with finite dimensional weight spaces which is not cuspidal, then $M$ is a highest (or lowest) weight module.
\end{lemm}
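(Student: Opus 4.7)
The plan is to find a nonzero vector in $M$ killed by $L_n$ and $G_r$ for all sufficiently large (resp.\ sufficiently negative) $n, r$, from which a highest (resp.\ lowest) weight module will follow by simplicity. I define
\[
M^+ := \{v \in M \mid \exists\, N \in \bN \text{ with } L_n v = G_r v = 0 \text{ for all } n \geq N,\ r \geq N\},
\]
and $M^-$ analogously with $n \leq -N$, $r \leq -N$. Using the brackets $[L_k,L_n]=(n-k)L_{k+n}$, $[L_k,G_r]=(r-k/2)G_{k+r}$, and $[G_r,G_s]=0$, a routine index-shift check shows $L_k M^\pm \subseteq M^\pm$ and $G_s M^\pm \subseteq M^\pm$, so $M^\pm$ are $\widehat{\mathfrak L}$-submodules. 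By simplicity each of $M^+, M^-$ is either $0$ or $M$.

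If $M^+ = M$, then every vector $v \in M$ satisfies $L_n v = 0$ for $n \gg 0$, so Lemma \ref{weightupper} applied separately to the $\bZ$- and $(\bZ+\tfrac12)$-coset Vir-submodules of $M$ bounds $\mathrm{supp}(M)$ above. A nonzero vector of maximal weight is then annihilated by every $L_n, G_r$ with $n, r > 0$, hence is a highest weight vector; by simplicity $M$ is a simple highest weight module. Symmetrically, if $M^- = M$ then $M$ is a simple lowest weight module.

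The main obstacle is to rule out the case $M^+ = M^- = 0$ under the non-cuspidality hypothesis. Under this assumption, restricting $M$ to $\mathrm{Vir}$, no simple Vir-subquotient of $M$ can be of highest or lowest weight type: such a subquotient would, when lifted and combined with simplicity of $M$, produce a vector contradicting $M^\pm = 0$. Hence by Theorem \ref{thm-vir} every simple Vir-subquotient of $M$ is of the form $\mathcal A_{a,b}'$ and is cuspidal, so the $\Omega$-operator identities of Lemmas \ref{Omegaoper}, \ref{Omegaoper-S} and \ref{Omegaoper-OS} annihilate these subquotients uniformly. Following the strategy of \cite{BF} adapted to the super setting in \cite{CL}, these uniform annihilations can then be transferred through the Vir-composition series to yield a uniform upper bound on $\dim M_\mu$, contradicting the non-cuspidality of $M$. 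The delicate technical point is coordinating the odd $G_r$-action with the even $L_n$-action across the Vir-subquotients using $\underline{\Omega}$ and $\overline{\Omega}$, and controlling the length of the Vir-composition series of $M$ well enough that the transfer actually delivers a global multiplicity bound rather than only a subquotient-wise one.
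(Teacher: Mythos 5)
Your first two paragraphs are sound and essentially agree with the paper: the set you call $M^+$ is (up to the formulation) the paper's submodule $M'=\{v\in M\mid\dim\widehat{\mathfrak L}^+v<\infty\}$, and the passage from $M^+=M$ to an upper-bounded support via Lemma \ref{weightupper} is exactly how the paper concludes. The genuine gap is in your third paragraph, i.e.\ in ruling out $M^+=M^-=0$. The paper never needs the elaborate $\Omega$-operator/composition-series transfer you sketch; it uses the non-cuspidality hypothesis \emph{constructively} and locally: since $M$ is not cuspidal, there are $\lambda\in\mathrm{supp}(M)$ and $k$ with $\dim M_{\lambda-k}>\dim M_\lambda+\dim M_{\lambda+\frac12}+\dim M_{\lambda+1}$, so the linear map $w\mapsto(L_kw,\,G_{k+\frac12}w,\,L_{k+1}w)$ from $M_{\lambda-k}$ has nonzero kernel; any $w$ in that kernel is annihilated by the subalgebra generated by $L_k,L_{k+1},G_{k+\frac12}$, which contains every $\widehat{\mathfrak L}_i$ for $i$ large (since $[\widehat{\mathfrak L}_i,\widehat{\mathfrak L}_j]=\widehat{\mathfrak L}_{i+j}$). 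Hence $M^+\neq0$ when $k>0$ (and $M^-\neq0$ when $k<0$), and the trichotomy you set up collapses to the two good cases. This one-line linear-algebra step is the missing idea in your write-up.

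By contrast, the route you propose for the third case is not a proof as it stands, and I doubt it can be completed in the form described. First, a highest-weight Vir-subquotient does not ``lift'' to a vector of $M$ killed by all $L_n$, $n\gg0$: a lift is only killed modulo the smaller term of the filtration, so the claimed contradiction with $M^+=0$ does not follow. Second, a non-cuspidal weight module has unbounded weight multiplicities, so its Vir-composition series need not be finite, and Lemmas \ref{Omegaoper}--\ref{Omegaoper-OS} are stated for cuspidal modules (of bounded composition length); there is no uniform $m$ to transfer across infinitely many subquotients. You have in effect replaced a soft pigeonhole argument by the hardest part of the entire classification machinery, and left that part unproved.
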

\begin{proof} It is essentially same as that of Lemma 4.2 (1) in \cite{CL}.

Fix a $\lambda\in\mathrm{supp}(M)$. Since $M$ is not cuspidal, then there is a $k\in\frac12\bZ$ such that $\dim M_{-k+\lambda}>2(\dim M_\lambda+M_{\lambda+\frac12}+\dim M_{\lambda+1})$. Without loss of generality, we may assume that $k\in\bN$. Then there exists a nonzero element $w\in M_{-k+\lambda}$ such that $L_kw=L_{k+1}w=G_{k+\frac12}w=0$. Therefore, $L_iw=G_{i-\frac12}w=0$ for all $i\geq k^2$, since $[\widehat{\mathfrak L}_i,\widehat{\mathfrak L}_j]=\widehat{\mathfrak L}_{i+j}$.

It is easy to see that $M'=\{v\in M\,|\,\dim\widehat{\mathfrak L}^+v<\infty\}$ is a nonzero submodule of $M$, here $\widehat{\mathfrak L}^+=\sum\limits_{n\in\bZ_+}(\bC L_n+\bC G_{n-\frac12})$. Hence $M=M'$. So, Lemma \ref{weightupper} tells us that $\mathrm{supp}(M)$ is upper bounded, that is $M$ is a highest weight module.
\end{proof}

Combining Lemma \ref{appN=1} and Theorem \ref{cuspidal}, we can get the following result.
\begin{theo}\label{main1}
 Any simple $\widehat{\mathfrak L}$ module with finite dimensional weight spaces is a highest weight module, lowest weight module, or is isomorphic to  $\mathcal A_{a,b}'$ for some $a, b\in\bC$.
\end{theo}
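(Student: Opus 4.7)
The plan is to dichotomize the analysis of $M$ according to whether it is cuspidal. Recall that a simple weight module with finite-dimensional weight spaces is cuspidal if and only if the weight-space dimensions are uniformly bounded on $\mathrm{supp}(M)$; otherwise they must be unbounded.

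First I would handle the non-cuspidal case, which is essentially free: if $M$ is not cuspidal, then Lemma \ref{appN=1} applies directly and forces $M$ to be either a highest weight module or a lowest weight module. No further work is required here.

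Second, I would handle the cuspidal case. The idea is to reduce the problem to Theorem \ref{cuspidal}, which is stated for the centreless algebra $\mathfrak L = \widehat{\mathfrak L}/\bC C$. Since $C$ is central and $M$ is simple, Schur's lemma gives $C|_M = c\cdot \mathrm{id}$ for some scalar $c\in\bC$. Restricting $M$ to the subalgebra $\mathrm{Vir}\subset \widehat{\mathfrak L}$ produces a cuspidal $\mathrm{Vir}$-module, and it is a classical fact (used already in the proof of Theorem \ref{cuspidal} via \cite{KS}) that $C$ must act as zero on any cuspidal Virasoro module. Hence $c=0$, so $M$ factors through a simple cuspidal $\mathfrak L$-module. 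Applying Theorem \ref{cuspidal} then identifies $M$ with $\mathcal A_{a,b}'$ for some $a,b\in\bC$, with $G_r$ acting trivially.

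There is no serious obstacle: the only technical point is the vanishing of the central charge on cuspidal modules, which is immediate from the Virasoro restriction and the classification recalled in Section \ref{pre}. Combining the two cases yields the three advertised alternatives (highest weight, lowest weight, or $\mathcal A_{a,b}'$), completing the proof.
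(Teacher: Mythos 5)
Your proposal is correct and follows essentially the same route as the paper, which proves the theorem by combining Lemma \ref{appN=1} (non-cuspidal case) with Theorem \ref{cuspidal} (cuspidal case). Your extra care about the central element $C$ acting by zero is already absorbed into the paper's proof of Theorem \ref{cuspidal}, so nothing further is needed.
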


\section{Applications}\label{final}

Some Lie superalgebras were constructed in \cite{WCB} as an application of the classification of Balinsky-Novikov super-algebras with dimension $2|2$. As applications of the above results, we can classify all Harish-Chandra modules over many Lie superalgebras listed in Table 7 in \cite{WCB}.

\subsection{The Lie superalgebra $\frak q$}
By definition the Lie superalgebra $\frak q=\frak q_{\bar0}+\frak q_{\bar1}$, where $\frak q_{\bar0}:=\bC\{L_m, H_m,  C\mid m\in\bZ\}$ and $\frak q_{\bar1}=\bC\{G_p\mid p\in\bZ+\frac12\}$, is a subalgebra of the $N=2$ Neveu-Schwarz superconformal algebra, with the following relations:
\begin{eqnarray}
&& [L_m,L_n]=(n-m)L_{n+m}+{1\over12}(n^3-n)C,\nonumber\\
&&[H_m,H_n]={1\over3}m\delta_{m+n,0}C,\ \ \ \ \ \ \ \ \ \  [L_m, H_n]=nH_{m+n},\nonumber\\
&&\label{gd2}
[L_m,G_p]=(p-\frac{m}{2})G_{p+m},\ \ \ \ \ \ \ \ [H_m,G_p]=G_{m+p},\label{qdef3}\\
&&[G_p, G_q]=0,\nonumber
\end{eqnarray}
for $m,n\in\bZ,\,p, q\in\bZ+\frac12$.

Clearly $\mathcal A_{a, b, c}=\sum_{i\in\bZ}\bC v_i$ is a $\frak q$-module with
\begin{eqnarray*}
&& L_mv_i=(a+bm+i)v_{m+i}, \ H_mv_i=cv_{m+i}, \ G_rv_i=0, \forall m, i\in\bZ, r\in\bZ+\frac12.
\end{eqnarray*}
Moreover ${\mathcal {A}}_{a, b, c}$ is simple if and only if $a\not\in\bZ$, or $b\ne 0, 1$ or $c\ne 0$. We also use $\mathcal A_{a, b, c}'$ to denote by the simple sub-quotient of $\mathcal A_{a, b, c}$.

\begin{prop}\label{cus-q}
Any simple cuspidal $\frak q$-module $V$ is isomorphic to the module $\mathcal A_{a, b, c}'$ of the intermediate series  for some $a, b, c\in\bC$.
\end{prop}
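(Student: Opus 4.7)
The plan is to reduce to Theorem~\ref{cuspidal} for the Ovsienko-Roger subalgebra generated by $\{L_m, G_r\}$ and then apply the known classification of simple cuspidal modules for the twisted Heisenberg-Virasoro algebra. To begin, I would note that the derivations in Lemmas~\ref{Omegaoper-S} and \ref{Omegaoper-OS}, as well as the nilpotency argument of Lemma~\ref{nilplemma}, use only that the module is cuspidal together with the brackets among $\{L_m, G_r\}$; simplicity as an $\mathfrak{L}$-module is never invoked when producing the $\underline{\Omega}$-identities or when unfolding them inductively. Hence, viewing $V$ as a (not necessarily simple) cuspidal $\mathfrak{L}$-module, there exists $N\in\bZ_+$ with $\mathfrak{L}_{\bar 1}^{N}V=0$.

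Next, I would show that $\mathfrak{L}_{\bar 1}V$ is a $\mathfrak{q}$-submodule of $V$. Closure under $L_m$ follows from $[L_m,G_r]=(r-\tfrac{m}{2})G_{m+r}\in\mathfrak{L}_{\bar 1}$; closure under $G_s$ uses $[G_r,G_s]=0$ to give $G_s G_r v=-G_r G_s v\in \mathfrak{L}_{\bar 1}^{2} V\subseteq \mathfrak{L}_{\bar 1}V$; the new ingredient is closure under $H_m$, which via $[H_m,G_r]=G_{m+r}\in\mathfrak{L}_{\bar 1}$ yields $H_m G_r v=G_r H_m v+G_{m+r}v\in\mathfrak{L}_{\bar 1}V$. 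Simplicity of $V$ as a $\mathfrak{q}$-module then forces $\mathfrak{L}_{\bar 1}V\in\{0,V\}$, and the nilpotency from the previous step rules out the case $\mathfrak{L}_{\bar 1}V=V$. Therefore $G_r V=0$ for every $r\in\bZ+\tfrac12$.

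Finally, with $G_rV=0$, any $\mathfrak{q}_{\bar 0}$-submodule of $V$ is automatically $\mathfrak{q}$-stable, so $V$ is simple and cuspidal as a module over the even part $\mathfrak{q}_{\bar 0}$, which is the twisted Heisenberg-Virasoro algebra $\mathfrak{L}_0$. Invoking the classification of simple cuspidal modules over $\mathfrak{L}_0$ (recalled in the introduction), one concludes $V\cong \mathcal{A}_{a,b,c}'$ for some $a,b,c\in\bC$. The main obstacle is really the $\mathfrak{q}$-stability of $\mathfrak{L}_{\bar 1}V$ under the Heisenberg generators $H_m$; once this is verified, everything else is transported from Theorem~\ref{cuspidal} together with the cited cuspidal classification for $\mathfrak{L}_0$.
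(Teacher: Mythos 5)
Your proposal is correct and reaches the result by the same overall strategy as the paper: show the odd generators annihilate $V$, observe that $V$ then becomes a simple cuspidal module over the even part $\frak q_{\bar0}$ (a copy of the twisted Heisenberg--Virasoro algebra), and invoke the Lu--Zhao classification \cite{LvZ} --- which is the reference the paper actually cites for the last step, rather than anything recalled in the introduction. The one place where you genuinely diverge is the mechanism for proving $G_rV=0$: the paper restricts $V$ to the subalgebra $\frak q'\cong\widehat{\mathfrak L}$, chooses a simple $\frak q'$-constituent $V'$, applies Theorem \ref{cuspidal} to get $G_rV'=0$, and then propagates this to all of $V$ via $[H_m,G_p]=G_{m+p}$ and $V={\rm Ind}_{\frak q'}^{\frak q}V'$; you instead apply the nilpotency Lemma \ref{nilplemma} to the (possibly non-simple) restriction $V|_{\mathfrak L}$ --- correctly noting that its proof uses only Lemma \ref{Omegaoper-OS} and never simplicity --- and check that $\mathfrak L_{\bar 1}V$ is a $\frak q$-submodule, the only nontrivial point being stability under $H_m$, which again comes from $[H_m,G_r]=G_{m+r}$. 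Your version is arguably cleaner, since it avoids having to produce a simple $\frak q'$-submodule of the restriction and the slightly awkward induced-module step in the paper's argument; both proofs ultimately rest on the same two relations and the same external classification, so I would count this as the same approach with a more careful execution of one step.
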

\begin{proof}
Clearly, the subalgebra $\frak q':={\rm span}\{L_m, G_r, C\mid m\in\bZ, r\in\bZ+\frac12\}$ is isomorphic to $\mathfrak L$.
By Theorem \ref{cuspidal}, we can choose an irreducible $\frak q'$-module $V'$ with $G_rV'=0$ for all $r\in\bZ+\frac12$. In this case we have $V={\rm Ind}_{\frak q'}^{\frak q}V'$.
Moreover we have $G_rV=0$ for all $r\in\bZ+\frac12$ by  \eqref{qdef3}.
In this case the $\frak q$-module $V$ is simple if and only if $V$ is a simple $\frak q_{\bar0}$-module.
So the proposition follows from the main theorem in \cite{LvZ}.
\end{proof}

\begin{rema}
Proposition \ref{cus-q} palys a key role in the classification of all simple cuspidal weight module for the $N=2$ Neveu-Schwarz superconformal algebra, see \cite{LPX0}.

\end{rema}

\subsection{The $N=1$ BMS$_3$ algebra}

The Bondi-Metzner-Sachs (BMS$_3$) algebra  is the symmetry algebra
of asymptotically flat three-dimensional spacetimes \cite{BBM}. It is the semi-direct product of the
Virasoro algebra with its adjoint module. The $N=1$ super-BMS$_3$ is a minimal supersymmetric extension of the BMS$_3$ algebra, which has been introduced  to describe the asymptotic structure of the $N=1$ supergravity in \cite{BDMT}.
\begin{defi}\label{BMS} The $N$=1 BMS$_3$ superalgebra $\mathcal B$ is a Lie superalgebra with a basis $\{L_m,  I_m,  Q_r,  C_1,  C_2\mid m\in\bZ,  r\in\bZ+\frac12\}$, with the following commutation relations:
\begin{eqnarray*}\label{brackets}
{[L_m, L_n]}&=&(m-n)L_{m+n}+{1\over12}\delta_{m+n, 0}(m^3-m)C_1,\\
{[L_m, I_n]}&=&(m-n)I_{m+n}+{1\over12}\delta_{m+n, 0}(m^3-m)C_2,\\
{[Q_r, Q_s]}&=&2I_{r+s}+{1\over3}\delta_{r+s, 0}\left(r^2-\frac14\right)C_2,\\
{[L_m, Q_r]}&=&\left(\frac{m}{2}-r\right)Q_{m+r},\\
{[I_m,I_n]}&=&[M_n,Q_r]=0, \quad [C_1,\frak g]=[C_2, \frak g]=0
\end{eqnarray*} for any $m, n\in\bZ, r, s\in\bZ+\frac12$.
\end{defi}

Note that $\mathcal B=\mathcal B_{\bar0}+\mathcal B_{\bar1}$, where $\mathcal B_{\bar0}:=\bC\{L_m, I_m,  C_1, C_2\mid m\in\bZ\}$ and $\mathcal B_{\bar1}=\bC\{Q_p\mid p\in\bZ+\frac12\}$.  The quotient algebra $\mathcal B/J$ is isomorphic to $\mathfrak L$, where $J=\bC\{I_m\mid m\in\bZ\}$.

Clearly the Vir-module $\mathcal A_{a, b}$ can be become a $\mathcal B$-module with the trivial actions of $I_m, Q_r$ for any $m\in\bZ, r\in\bZ+\frac12$.

\begin{prop}\label{cus-BMS}
Any simple cuspidal $\mathcal B$-module $V$ is isomorphic to the module $\mathcal A_{a, b}'$ of the intermediate series  for some $a, b\in\bC$.
\end{prop}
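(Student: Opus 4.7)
The strategy parallels that of Proposition \ref{cus-q}, but requires an essential modification because in $\mathcal B$ the bracket $[Q_r,Q_s]=2I_{r+s}+\tfrac13\delta_{r+s,0}(r^2-\tfrac14)C_2$ is nonzero, whereas in $\mathfrak{q}$ and $\mathfrak L$ the corresponding odd generators satisfy $[G_r,G_s]=0$. Consequently $\mathrm{span}\{L_m,Q_r,C_1\}$ is \emph{not} a subalgebra of $\mathcal B$, and we cannot directly restrict $V$ to a copy of $\widehat{\mathfrak L}$. Instead I use that $\tilde J:=\bC\{I_m,C_2\mid m\in\bZ\}$ is an ideal of $\mathcal B$ with $\mathcal B/\tilde J\cong\widehat{\mathfrak L}$, and aim to reduce to Theorem \ref{cuspidal} by showing $\tilde JV=0$.

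Standard arguments for cuspidal Vir-modules give $C_1V=0$. I then establish two $\Omega$-operator annihilation results: (i) for some $m\in\bZ_+$, $\overline\Omega^{(m)}_{r,s}:=\sum_{i=0}^m(-1)^i\binom{m}{i}Q_{r-i}L_{s+i}$ annihilates $V$, by exactly the argument of Lemma \ref{Omegaoper-S} since $[L_m,Q_r]$ has the same form as $[L_m,G_r]$; (ii) for some $m\in\bZ_+$, $\sum_{i=0}^m(-1)^i\binom{m}{i}I_{r-i}L_{s+i}$ annihilates $V$, obtained from Lemma \ref{Omegaoper} applied to $V$ as a cuspidal module over the even subalgebra $\mathcal B_{\bar0}$. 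Acting by $Q_t$ on (i), expanding $Q_{r-i}Q_t=[Q_{r-i},Q_t]-Q_tQ_{r-i}$ with $[Q_{r-i},Q_t]=2I_{r-i+t}+\text{central}$ and using $[L_{s+i},Q_t]=(\tfrac{s+i}{2}-t)Q_{s+i+t}$, choosing $r,t$ so the $C_2$-contribution vanishes, and then eliminating the resulting $I$-term via (ii), yields
\[
\sum_{i=0}^m(-1)^i\binom{m}{i}\bigl(\tfrac{s+i}{2}-t\bigr)Q_{r-i}Q_{s+i+t}v=0.
\]
As in the proof of Lemma \ref{Omegaoper-OS}, varying $s$ with $u=s+t$ and $r$ held fixed isolates the coefficient of $s$ and forces $\underline\Omega^{(m)}_{r,u}:=\sum_{i=0}^m(-1)^i\binom{m}{i}Q_{r-i}Q_{u+i}$ to annihilate $V$.

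With $\underline\Omega(Q)V=0$ in hand, the inductive nilpotency argument of Lemma \ref{nilplemma} carries over verbatim with $Q$ in place of $G$ and gives $(\mathcal B_{\bar1})^NV=0$ for some $N\in\bZ_+$. Since $\mathcal B_{\bar1}V$ is a $\mathcal B$-submodule and $(\mathcal B_{\bar1})^NV=0\ne V$, simplicity of $V$ forces $\mathcal B_{\bar1}V=0$, so $Q_rV=0$ for every $r\in\bZ+\tfrac12$. Substituting into $[Q_r,Q_{-r}]=2I_0+\tfrac13(r^2-\tfrac14)C_2$ gives $0=2I_0v+\tfrac13(r^2-\tfrac14)c_2v$ for every $v\in V$ and every $r$; varying $r$ forces $c_2=0$ and $I_0V=0$, and $[Q_r,Q_s]V=0$ for $r+s\ne 0$ gives $I_{r+s}V=0$. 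Hence $\tilde JV=0$, so $V$ descends to a simple cuspidal $\mathfrak L$-module, and Theorem \ref{cuspidal} identifies $V\cong\mathcal A_{a,b}'$ for some $a,b\in\bC$. The principal obstacle is the derivation of $\underline\Omega(Q)V=0$: one must control simultaneously the $I$-correction introduced by $[Q_t,Q_{r-i}]$ and the central $C_2$-correction, which requires the independent $\Omega$-annihilation (ii) for $\mathcal B_{\bar0}$ as an auxiliary input before the clean $Q$-only annihilation can be extracted.
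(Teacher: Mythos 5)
Your steps (i), (ii), the extraction of $\underline\Omega^{(m)}(Q)V=0$, and the final deduction of $I_nV=C_2V=0$ from $Q_rV=0$ are all reasonable (step (ii) needs one further $\mathrm{ad}\,I_t$ application beyond the literal statement of Lemma \ref{Omegaoper}, parallel to how Lemma \ref{Omegaoper-S} is derived, but that is routine). The genuine gap is the assertion that the nilpotency argument of Lemma \ref{nilplemma} ``carries over verbatim with $Q$ in place of $G$''. That argument depends essentially on $[G_r,G_s]=0$: when \eqref{omega3} is multiplied by the monomials $G_{s+1}\cdots G_{s+m}$, etc., every term containing a repeated factor is killed because the odd generators anticommute and square to zero in $U(\mathfrak L)$, and the surviving terms are reordered freely up to sign. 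In $\mathcal B$ one has $Q_rQ_s+Q_sQ_r=2I_{r+s}+\tfrac13\delta_{r+s,0}\bigl(r^2-\tfrac14\bigr)C_2$, so $Q_t^2$ acts as $I_{2t}$ and every transposition of two $Q$'s introduces an $I$-term; the repeated-index terms no longer vanish, and the reordering produces uncontrolled expressions of the form $I_n\cdot(\text{monomial in the }Q\text{'s})$ acting on $V$. These obstructions disappear only if one already knows $I_nV=C_2V=0$ --- but in your argument that fact is deduced at the very end, as a consequence of $Q_rV=0$, which is itself obtained from the nilpotency step. As written, the argument is circular.

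The paper sidesteps this by killing $J=\mathrm{span}\{I_m\}$ first: $\mathcal B_{\bar0}\cong W(2,2)$, and the known classification of simple cuspidal $W(2,2)$-modules (Theorem 4.6 of \cite{GLZ}) gives an irreducible $\mathcal B_{\bar0}$-submodule $V'$ with $I_mV'=0$; since $[I_m,Q_r]=0$ and $V=U(\mathcal B_{\bar1})V'$, this propagates to $I_mV=0$ and $[Q_r,Q_s]V=0$, so $V$ factors through $\mathcal B/J\cong\mathfrak L$ and Theorem \ref{cuspidal} finishes the proof with no separate $\Omega$-computation for the $Q$'s. To repair your route without importing the $W(2,2)$ classification, you would need to establish $I_nV=0$ (and $C_2V=0$) \emph{before} the nilpotency step, for instance by a self-contained cuspidal argument on the even part; that is the missing idea.
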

\begin{proof}
Clearly, the subalgebra $\mathcal B_{\bar0}$ is isomorphic to $W(2, 2)$.
By Theorem 4.6 in \cite{GLZ}, we can choose an irreducible $\mathcal B_{\bar0}$-module $V'$ with $I_mV'=0$ for all $m\in\bZ$. In this case we have $V={\rm Ind}_{\mathcal B_{\bar0}}^{\mathcal B}V'$.
Moreover we have $I_mV=0$ and $[G_r, G_s]V=0$ for all $m\in\bZ, r, s\in\bZ+\frac12$ by  Definition \ref{BMS}.
In this case the $\mathcal B$-module $V$ is simple if and only if $V$ is a simple $\mathcal B/J$-module.
So the proposition follows from Theorem \ref{cuspidal}.
\end{proof}

\subsection{The super $W(2,2)$ algebra}

By definition, the super $W(2,2)$ algebra is the Lie superalgebra $SW(2,2):= \bC\{L_m, I_m,  G_r, Q_r,  C_1,  C_2\mid m\in\bZ,  r\in\bZ+\frac12\}$, with the following relations:
\begin{eqnarray}\label{brackets}
\begin{array}{lllll}
&[L_m, L_n]=(m-n)L_{m+n}+{1\over12}\delta_{m+n, 0}(m^3-m)C_1,\\
&[L_m, I_n]=(n-m)I_{m+n}+{1\over12}\delta_{m+n, 0}(m^3-m)C_2,\\
&[G_r, G_s]=2L_{r+s}+{1\over3}\delta_{r+s, 0}(r^2-\frac14)C_1,\\
&[G_r, Q_s]=2I_{r+s}+{1\over3}\delta_{r+s, 0}(r^2-\frac14)C_2,\\
&[L_m, G_r]=(\frac{m}{2}-r)G_{m+r}, \   [L_m, Q_r]=(\frac{m}{2}-r)Q_{m+r}, \\
&[I_m,G_r]=(\frac{m}{2}-r)Q_{m+r},
\end{array}
\end{eqnarray} for any $m, n\in\bZ, r, s\in\bZ+\frac12$.

Note that $SW(2,2)=SW(2,2)_{\bar0}+SW(2,2)_{\bar1}$, where $SW(2,2)_{\bar0}:=\bC\{L_m, I_m,  C_1, C_2\mid m\in\bZ\}$ and $SW(2,2)_{\bar1}=\bC\{G_p, Q_p\mid p\in\bZ+\frac12\}$.

Clearly the subalgebra generated by $\{L_m, G_r~|~ m\in\bZ, r\in\bZ+\frac12\}$ is isomorphic to the $N=1$ Neveu-Schwarz algebra $\frak S$.
From \cite{S2} we see that $S(a, b)$ or $\Pi S(a, b)$ is the Harich-Chandra module of intermediate series over $\frak S$ for some $a, b\in\bC$, where $S_{a, b}$ defined as follows:
\begin{eqnarray*}
&&S_{a, b}:=\sum_{i\in\bZ}\bC x_i+\sum_{k\in\bZ+\frac12}\bC y_k\ \hbox{with}\\
L_nx_i&=&(a+bn+i)x_{i+n}, \ L_ny_k=(a+(b+\frac12)n+k)y_{k+n},\\
G_rx_i&=&(a+i+2rb)y_{r+i},  \hskip60pt   G_ry_k=-x_{r+k},
\end{eqnarray*} for all $n, i\in\bZ, r, k\in\bZ+\frac12$.

Moreover $S_{a, b}$ is simple if and only if $a\not\in\bZ$ or $a\in\bZ$ and $b\ne0, \frac12$. We also use $S_{a, b}'$ to denote by the simple sub-quotient of $S_{a, b}$.

Clearly the $\frak  S$-modules $S_{a, b}$ and $\Pi S_{a, b}$ become  $SW(2,2)$-modules with trivial actions of $I_m, Q_{m+\frac12}$ for any $m\in\bZ$.

\begin{prop}\label{cus-p}
Any simple cuspidal $SW(2, 2)$-module $V$ is isomorphic to $S_{a, b}'$ or $\Pi S_{a, b}'$  for some $a, b\in\bC$.
\end{prop}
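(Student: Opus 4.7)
The plan is to reduce $V$ to a module over the quotient $SW(2,2)/J$, where $J:=\mathrm{span}\{I_m,Q_r,C_2\mid m\in\bZ,r\in\bZ+\tfrac12\}$ is an abelian ideal of $SW(2,2)$ satisfying $SW(2,2)/J\cong\mathfrak S$ (the $N=1$ Neveu--Schwarz algebra). Once $JV=0$ has been established, the classification of simple cuspidal $\mathfrak S$-modules recalled just before the proposition immediately yields the desired conclusion.

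To prove $JV=0$, I plan to work with the intermediate subalgebra $\mathfrak A:=\mathrm{span}\{L_m,I_m,Q_r,C_1,C_2\}$, obtained by deleting the $G_r$. This $\mathfrak A$ contains the even part $SW(2,2)_{\bar0}\cong W(2,2)$ and satisfies $\mathfrak A/\mathrm{span}\{I_m,C_2\}\cong\widehat{\mathfrak L}$. Since $V|_{\mathfrak A}$ is cuspidal, one can pick a simple $\mathfrak A$-submodule $V_0\subseteq V$. Inside $V_0$, choose a simple $W(2,2)$-submodule $V'_0$; by Theorem~4.6 of \cite{GLZ} it is of intermediate series, so $I_mV'_0=0$ for all $m$. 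The subspace $\{v\in V_0\mid I_mv=0\,\forall m\}$ is $\mathfrak A$-stable (the brackets $[I_m,L_n]$, $[I_m,I_n]=0$ and $[I_m,Q_r]=0$ never leave $\mathrm{span}\{I_m,C_2\}$, and $C_2$ acts by zero on the cuspidal module) and nonzero, so by simplicity of $V_0$ it equals $V_0$. Thus $I_mV_0=0$, and $V_0$ becomes a simple cuspidal $\widehat{\mathfrak L}$-module, whence Theorem~\ref{cuspidal} also forces $Q_rV_0=0$.

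Next I propagate the vanishing from $V_0$ to the whole $V$. Let
\[W:=\{v\in V\mid I_mv=0=Q_rv\text{ for all }m\in\bZ,\,r\in\bZ+\tfrac12\}.\]
Stability of $W$ under $L_m$, $I_m$, $Q_r$ is immediate from the relevant brackets; for $G_r$ one uses
\[I_nG_rv=[I_n,G_r]v=(\tfrac{n}{2}-r)Q_{n+r}v=0\quad\text{and}\quad Q_sG_rv=-[G_r,Q_s]v=-(2I_{r+s}+\text{central})v=0\]
for $v\in W$ (the central terms vanishing because $C_2$ acts by zero). Hence $W$ is an $SW(2,2)$-submodule, and since $W\supseteq V_0\neq0$ and $V$ is simple, $W=V$.

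The hard part is handling the non-trivial bracket $[I_m,G_r]=(\tfrac{m}{2}-r)Q_{m+r}$, which links the $I$- and $Q$-parts of $J$ via $G$ and blocks the one-step propagation used in Proposition~\ref{cus-BMS} (where $[I,Q]=0$ already sufficed). The workaround above kills $I$ and $Q$ \emph{simultaneously} on a single simple $\mathfrak A$-submodule $V_0$---first $I$ via the $W(2,2)$-classification of \cite{GLZ}, then $Q$ via Theorem~\ref{cuspidal}---after which the joint vanishing locus $W\subseteq V$ is automatically stable under $G_r$ and the result collapses to the $\mathfrak S$-classification.
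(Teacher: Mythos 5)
Your argument is correct and follows essentially the same route as the paper: the paper also sets $\frak p=\mathrm{span}\{L_m,I_m,Q_r,C_1,C_2\}$ (your $\mathfrak A$), extracts a simple $\frak p$-submodule $V'$ annihilated by the $I_m$ and $Q_r$ (there by citing Proposition \ref{cus-BMS} rather than re-deriving it from \cite{GLZ} and Theorem \ref{cuspidal}), and then passes to $V$ and invokes the Neveu--Schwarz classification of \cite{S2}. Your only real addition is that you spell out the propagation step --- checking that the joint annihilator $W$ is $G_r$-stable despite the bracket $[I_m,G_r]=(\tfrac{m}{2}-r)Q_{m+r}$ --- which the paper compresses into ``$V=\mathrm{Ind}_{\frak p}^{SW(2,2)}V'$ and $I_mV=Q_rV=0$ by Definition \ref{brackets}.''
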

\begin{proof}
Set $\frak p={\rm span}\{L_m, I_m, Q_r, C_1, C_2\mid m\in\bZ, r\in\bZ+\frac12\}$. By Proposition \ref{cus-BMS}  we can choose a simple $\frak p$-module $V'$ with $I_mV'=Q_rV'=0$ for all $m\in\bZ, r\in\bZ+\frac12$. In this case we have $V={\rm Ind}_{\frak p}^{SW(2,2)}V'$.
Moreover we have $I_mV=Q_rV=0$ for all $m\in\bZ, r\in\bZ+\frac12$ by  Definition \ref{brackets}. In this case the $SW(2, 2)$-module $V$ is simple if and only if $V$ is a simple $\frak S$-module.
So the proposition follows from the main theorem  in \cite{S2} (also see Theorem 4.5 in \cite{CL}).
\end{proof}

\begin{rema}
We can easily prove that any simple simple Harish-Chandra modules over the all above Lie superagebras  is a cuspidal module, or  a highest/lowest weight module as Lemma \ref{appN=1}. So all simple Harish-Chandra modules over the above Lie superalgebras are also classified.

\end{rema}

\begin{rema}
All indecomposible modules of the intermediate series and some other representations were studied in \cite{WGC} and \cite{WFL}.
\end{rema}

\noindent{\bf Acknowledgement:}  This work is partially supported by the NNSF (Nos. 12071405, 11971315, 11871249), and is partially supported by Xinjiang Uygur Autonomous Region graduate scientific research innovation project (No. XJ2021G021). The authors would like to
thank Prof. Rencai Lv for helpful discussions.

\end{document}